\documentclass[12pt]{amsart}

\usepackage{tikz}
\usepackage{tikz-cd}
\usetikzlibrary{arrows.meta, positioning}
\usetikzlibrary{arrows}
\usepackage{cases}
\usepackage{latexsym}
\usepackage{amsmath}
\usepackage[arrow,matrix]{xy}
\usepackage{stmaryrd}
\usepackage{amsfonts}
\usepackage{amsmath,amssymb,amscd,bbm,amsthm,mathrsfs,dsfont}
\usepackage{hyperref}
\usepackage{cite}
\usepackage{fancyhdr}
\usepackage{amsxtra,ifthen}
\usepackage{verbatim}
\usepackage{graphics}
\usepackage{tikz}
\usepackage{tikz-cd}

\theoremstyle{plain}
\newtheorem{theorem}{Theorem}[section]

\theoremstyle{definition}
\newtheorem{definition}{Definition}[section]
\newtheorem{example}{Example}[section]

\theoremstyle{remark}

\numberwithin{equation}{section}

\begin{document}

	\thispagestyle{empty}
	
	\title[Armendariz ring property via idempotent elements]{Armendariz ring property via idempotent elements}
	\author{Asma Ali and Shafahat Hussain}
	\address{Asma Ali, Department of Mathematics, Aligarh Muslim University, Aligarh-202002, India}
	\email{asma$\_$ali2@rediffmail.com}
	\address{Shafahat Hussain, Department of Mathematics, Aligarh Muslim University, Aligarh-202002, India}
	\email{hshafahat@gmail.com}

	\maketitle
	\begin{abstract} This paper introduces the concept of e-Armendariz rings, a generalization of Armendariz rings via idempotent element $e$. A ring $R$ is called right e-Armendariz if for any polynomials $f(x), g(x) \in R[x]$, the condition $f(x)g(x)=0$ implies $a_ib_je=0$ for all coefficients $a_i$ of $f(x)$ and $b_j$ of $g(x)$. We establish that this new class of rings is not left-right symmetric and provide several illustrative examples. The main result of this paper provides a characterization that a ring $R$ is right e-Armendariz if and only if the idempotent $e$ is left semicentral and the corner ring $eRe$ is an Armendariz ring. 
		\vspace{0.3cm}
		
		\noindent{\bf Mathematics Subject Classification (2010)}  16W20, 16S36, 16S50.
		
		\noindent{\bf{Keywords and phrases:}}  e-Armendariz ring, McCoy ring, Armendariz ring
	\end{abstract}	
\section{\textbf{Introduction}}
Throughout this paper, all rings are associative with identity unless otherwise specified. For a ring $R$, we denote the set of idempotent elements by $E(R)$ and the polynomial ring over $R$ by $R[x]$.

In 1974, Armendariz \cite{Armendariz} proved that for a reduced ring $R$ (i.e., a ring with no nonzero nilpotent elements), if two polynomials $f(x) = \sum_{i=0}^{n} a_i x^i$ and $g(x) = \sum_{j=0}^{m} b_j x^j$ in $R[x]$ satisfy $f(x)g(x) = 0$, then $a_i b_j = 0$ for all $i$ and $j$. This observation motivated Rege and Chhawchharia \cite{R13} to define a ring $R$ as an \textbf{Armendariz ring} if it satisfies this property. This class of rings, which properly contains reduced rings \cite{R6}, has been a subject of extensive study. The concept has led to numerous generalizations, such as weak Armendariz rings \cite{R11}, central Armendariz rings \cite{R1}, quasi-central Armendariz rings \cite{R10}, and J-Armendariz rings \cite{R14}, among others.

In a parallel development, a recent and fruitful line of inquiry in non-commutative ring theory involves generalizing classical ring properties by making them dependent on a specific idempotent $e \in E(R)$. This approach has led to the introduction of e-symmetric rings, e-reversible rings \cite{R4}, and e-reduced rings, providing a more nuanced way to classify and understand ring structures. A common theme in this research is that these generalized ``e-properties" of $R$ can often be characterized by properties of the idempotent $e$ (such as being semicentral) and the structure of the corner ring $eRe$.

The primary goal of this paper is to synthesize these two research trends. We introduce and investigate a new class of rings called \textbf{e-Armendariz rings}, which generalizes the Armendariz property using a specific idempotent element $e$.

This paper is organized as follows. In Section 2, we provide the formal definition of right and left e-Armendariz rings. We demonstrate with examples that this property is not left-right symmetric, as shown in Example 2.1, and that the class of e-Armendariz rings is distinct from that of classical Armendariz rings. The main result of this section is Theorem 2.1, which provides a complete characterization: a ring $R$ is right e-Armendariz if and only if the idempotent $e$ is left semicentral and the corner ring $eRe$ is an Armendariz ring.

	\section{\textbf{e-Armendariz Rings}}
	\begin{definition}
		Let $R$ be a ring and $e\ne 0$ be an idempotent of $R$. Then $R$ is said to be $\textit{right e-Armendariz}$ (respectively $\textit{left e-Armendariz}$) if whenever $f(x)=\displaystyle\sum_{i=0}^{n}a_ix^i$ and $g(x)=\displaystyle\sum_{j=0}^{m}b_jx^j$ satisfy $f(x)g(x)=0$ then $a_ib_je=0$ (respectively $ea_ib_j=0).$
		\end{definition}
The following example shows that $e$-Armendariz property is not left-right symmetric.
\begin{example}
	Let $R$ be a reduced ring. Consider $S=T_2(R)$ and $e=\begin{pmatrix}
		0&0\\0&1
	\end{pmatrix}$ is an idempotent of $S$. We prove that $S$ is left $e$-Armendariz but not right $e$-Armendariz. Let $f(x)=\sum_{i=0}^nA_ix^i$ and $g(x)=\sum_{j=0}^mB_jx^j$ are two nonzero polynomials in $S[x]$ such that $f(x)g(x)=0$, where $A_i=\begin{pmatrix}
	a_i&b_i\\0&c_i
	\end{pmatrix}$ and $B_j=\begin{pmatrix}
	p_j&q_j\\0&r_j
	\end{pmatrix}$ and $a_i,b_i,c_i,p_j,q_j,r_j\in R.$ Using $f(x)g(x)=0$, we get the equations $A_0B_0=0, A_0B_1+A_1B_0=0,\cdots,A_nB_m=0.$ On simplifying $A_0B_0=0$, we get\begin{eqnarray*}
	A_0B_0 &=&
	\begin{pmatrix}
		0 & 0 \\ 0 & 0
	\end{pmatrix}\\
	\begin{pmatrix}
		a_0 & b_0 \\ 0 & c_0
	\end{pmatrix}
	\begin{pmatrix}
		p_0 & q_0 \\ 0 & r_0
	\end{pmatrix}&=&\begin{pmatrix}
	0 & 0 \\ 0 & 0
	\end{pmatrix} \\
	\begin{pmatrix}
	a_0p_0 & a_0q_0 + b_0r_0 \\ 0 & c_0r_0
\end{pmatrix}&=&\begin{pmatrix}
0 & 0 \\ 0 & 0
\end{pmatrix},
	\end{eqnarray*} which implies, \begin{equation}
	c_0r_0=0.
	\end{equation}
	Now simplify, $A_0B_1+A_1B_0=0$
\begin{eqnarray*}
	A_0B_1 + A_1B_0 &=&
	\begin{pmatrix}
		0 & 0 \\ 0 & 0
	\end{pmatrix} \\
	\begin{pmatrix}
		a_0 & b_0 \\ 0 & c_0
	\end{pmatrix}
	\begin{pmatrix}
		p_1 & q_1 \\ 0 & r_1
	\end{pmatrix}
	+ 
	\begin{pmatrix}
		a_1 & b_1 \\ 0 & c_1
	\end{pmatrix}
	\begin{pmatrix}
		p_0 & q_0 \\ 0 & r_0
	\end{pmatrix}
	&=&
	\begin{pmatrix}
		0 & 0 \\ 0 & 0
	\end{pmatrix} \\
	\begin{pmatrix}
		a_0p_1 + a_1p_0 & a_0q_1 + b_0r_1 + a_1q_0 + b_1r_0 \\ 
		0 & c_0r_1 + c_1r_0
	\end{pmatrix}
	&=&
	\begin{pmatrix}
		0 & 0 \\ 0 & 0
	\end{pmatrix},
\end{eqnarray*}
which implies,
\begin{equation}
	c_0r_1+c_1r_0=0.
\end{equation}Similarly, on solving $A_nB_m=0$, we get \begin{eqnarray*}
A_nB_m&=&\begin{pmatrix}
	0 & 0 \\ 0 & 0
\end{pmatrix}\\
\begin{pmatrix}
	a_n & b_n \\ 0 & c_n
\end{pmatrix}
\begin{pmatrix}
	p_m & q_m \\ 0 & r_m
\end{pmatrix}&=&\begin{pmatrix}
	0 & 0 \\ 0 & 0
\end{pmatrix} \\
\begin{pmatrix}
	a_np_m & a_nq_m + b_nr_m \\ 0 & c_nr_m
\end{pmatrix}&=&\begin{pmatrix}
	0 & 0 \\ 0 & 0
\end{pmatrix},
\end{eqnarray*}which implies, \begin{equation}
c_nr_m=0.
\end{equation}
Now compute the value of $eA_iB_j$,
\begin{eqnarray*}
	eA_iB_j&=&\begin{pmatrix}
		0 & 0 \\ 0 & 1
	\end{pmatrix}\begin{pmatrix}
	a_i&b_i\\0&c_i
	\end{pmatrix}\begin{pmatrix}
	p_j&q_j\\0&r_j
	\end{pmatrix}\\
	&=&\begin{pmatrix}
		0 & 0 \\ 0 & c_i
	\end{pmatrix}\begin{pmatrix}
	p_j&q_j\\0&r_j
	\end{pmatrix}\\
	&=&\begin{pmatrix}
		0 & 0 \\ 0 & c_ir_j
	\end{pmatrix}.
\end{eqnarray*}
Since $R$ is a reduced ring, $c_ir_j=0$ by \cite[Lemma 1]{Armendariz} for all $0\leq i\leq n,0\leq j \leq m$. Hence $eA_iB_j=0$ which implies $S$ is left $e$-Armendariz. Now we prove that $S$ is not right $e$-Armendariz. Take $f(x)=\begin{pmatrix}
	1&0\\0&0
\end{pmatrix}+\begin{pmatrix}
1&-1\\0&0
\end{pmatrix}x,g(x)=\begin{pmatrix}
0&0\\0&1
\end{pmatrix}+\begin{pmatrix}
0&1\\0&1
\end{pmatrix}x$. Here $f(x)g(x)=0$, but $\begin{pmatrix}
1&0\\0&0
\end{pmatrix}\begin{pmatrix}
0&1\\0&1
\end{pmatrix}e=\begin{pmatrix}
1&0\\0&0
\end{pmatrix}\begin{pmatrix}
0&1\\0&1
\end{pmatrix}\begin{pmatrix}
0&0\\0&1
\end{pmatrix}=\begin{pmatrix}
0&1\\0&0
\end{pmatrix}\ne \begin{pmatrix}
0&0\\0&0
\end{pmatrix}.$ Hence $S$ is not right $e$-Armendariz. 
\end{example}It is clear that a ring $R$ is Armendariz if and only if $R$ is $1$-Armendariz. But not all $e$-Armendariz rings are Armendariz.
\begin{example}
	Let $R$ be a reduced ring and $S=R\times T_2(R)$. We prove that $S$ is not Armendariz. Take $f(x)=\left(0,\begin{pmatrix}
		1&0\\0&0
	\end{pmatrix}\right)+\left(0,\begin{pmatrix}
	1&-1\\0&0
	\end{pmatrix}\right)x$ and $g(x)=\left(0,\begin{pmatrix}
	0&0\\0&1
	\end{pmatrix}\right)+\left(0,\begin{pmatrix}
	0&1\\0&1
	\end{pmatrix}\right)x$. Clearly, $f(x)g(x)=0$. But $\left(0,\begin{pmatrix}
	1&0\\0&0
	\end{pmatrix}\right)\left(0,\begin{pmatrix}
	0&1\\0&1
	\end{pmatrix}\right)=\left(0,\begin{pmatrix}
	0&1\\0&0
	\end{pmatrix}\right)\ne \left(0,\begin{pmatrix}
	0&0\\0&0
	\end{pmatrix}\right)$. Hence $S$ is not Armendariz. Now we prove that $S$ is $e$-Armendariz for $e=\left(1,\begin{pmatrix}
	0&0\\0&0
	\end{pmatrix}\right).$
\end{example}
\begin{proof}
	Let $f(x)=(a_0,A_0)+(a_1,A_1)x+\cdots+(a_n,A_n)x^n,g(x)=(b_0,B_0)+(b_1,B_1)x+\cdots(b_m,B_m)x^m$ where $a_i,b_j\in R$ and $A_i,B_j\in T_2(R)$. Suppose $f(x)g(x)=0$, then we get the equations $(a_0,A_0)(b_0,B_0)=0, (a_0,A_0)(b_1,B_1)+(a_1,A_1)(b_0,B_0)=0,\cdots,(a_n,A_n)(b_m,B_m)=0$. On simplifying these equations, we get $a_0b_0=0,a_0b_1+a_1b_0=0,\cdots,a_nb_m=0$. Since $R$ is a reduced ring, again using \cite[Lemma 1]{Armendariz}, we get $a_ib_j=0$. Now compute $(a_i,A_i)(b_j,B_j)e$,
	\begin{eqnarray*}
		(a_i,A_i)(b_j,B_j)e&=&(a_ib_j,A_iB_j)\left(1,\begin{pmatrix}
			0&0\\0&0
		\end{pmatrix}\right)\\
		&=&(0,A_iB_j)\left(1,\begin{pmatrix}
			0&0\\0&0
		\end{pmatrix}\right)\\
		&=&\left(0,\begin{pmatrix}
			0&0\\0&0
		\end{pmatrix}\right).
	\end{eqnarray*}
	Hence $S$ is right $e$-Armendariz. Similarly, $S$ is left $e$-Armendariz. Therefore, $S$ is $e$-Armendariz.
\end{proof}
\begin{theorem}
	The following are equivalent for a ring $R$ and $e\in E(R)$:
	\begin{enumerate}
		\item $R$ is an $e$-Armendariz ring.
		\item $eRe$ is an Armendariz ring and $e$ is left semicentral.
	\end{enumerate}
\end{theorem}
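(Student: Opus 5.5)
Throughout, "$e$-Armendariz" in (1) is read as \emph{right} $e$-Armendariz, matching the abstract. "Left semicentral" is taken to mean $re=ere$ for all $r\in R$, equivalently $(1-e)Re=0$. This reading fits both examples: in Example 2.1, $e$ is not left semicentral and $S$ fails to be right $e$-Armendariz, while in Example 2.2, $e$ is central. The plan is to prove both implications directly from the definition.

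\emph{(1)$\Rightarrow$(2).} First we show $e$ is left semicentral. Fix $r\in R$ and set $t=(1-e)re$; note $et=0$ and $te=t$. Take
\[
f(x)=(1-e)+t\,x,\qquad g(x)=e-t\,x .
\]
Computing the coefficients of $f(x)g(x)$:
\begin{itemize}
\item constant term: $(1-e)e=0$;
\item $x$-coefficient: $-(1-e)t+te=-t+t=0$;
\item $x^2$-coefficient: $-t^2=-(1-e)re(1-e)re=0$, since $e(1-e)=0$.
\end{itemize}
So $f(x)g(x)=0$. Applying right $e$-Armendariz to the coefficient pair $a_0=1-e$, $b_1=-t$ gives $0=(1-e)(-t)e=-t$. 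Hence $(1-e)re=0$, i.e.\ $re=ere$.

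Next, $eRe$ is Armendariz. Take $f,g\in (eRe)[x]$ with $fg=0$. Since $b_j\in eRe$ we have $b_je=b_j$, so the hypothesis gives $a_ib_j=a_ib_je=0$ for all $i,j$.

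\emph{(2)$\Rightarrow$(1).} Let $f(x)=\sum a_ix^i$ and $g(x)=\sum b_jx^j$ with $f(x)g(x)=0$ in $R[x]$. The key step is to push $e$ through so that the problem lands in the corner ring. Left semicentrality gives $b_je=eb_je$ and $a_ie=ea_ie$, so
\[
a_ib_je=a_i(eb_je)=(a_ie)(b_je)=(ea_ie)(eb_je).
\]
Now put $f'(x)=\sum (ea_ie)x^i$ and $g'(x)=\sum (eb_je)x^j$ in $(eRe)[x]$. Each coefficient of $f'g'$ equals $\sum_{i+j=k}ea_ib_je$, which is the $k$-th coefficient of $e\,f(x)g(x)\,e=0$. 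Thus $f'g'=0$ in $(eRe)[x]$.

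Since $eRe$ is Armendariz, $(ea_ie)(eb_je)=0$ for all $i,j$, and therefore $a_ib_je=0$. This is exactly right $e$-Armendariz.

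The only nonroutine step is choosing the test polynomials that force $(1-e)Re=0$ in the first implication. The choice above, with the nilpotent-type element $t=(1-e)re$, works because $t^2=0$ and $et=0$; everything else is direct bookkeeping with $e$.
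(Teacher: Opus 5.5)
Your proof is correct and follows the paper's argument step for step. The paper uses the same test polynomials up to sign, $f(x)=(1-e)-(1-e)re\,x$ and $g(x)=e+(1-e)re\,x$, and extracts $a_0b_1e=(1-e)re=0$. It then restricts the hypothesis to $(eRe)[x]$ using $b_je=b_j$. For the converse it uses left semicentrality to rewrite $a_ib_je$ as $(ea_ie)(eb_je)$, so that $f(x)g(x)e=0$ becomes a vanishing product in $(eRe)[x]$.

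Reading (1) as right $e$-Armendariz is what the paper's proof actually uses. Under a left-sided or two-sided reading, (2)$\Rightarrow$(1) fails, for instance with $e$ the $(1,1)$ matrix unit in $T_2(R)$ for $R$ reduced.
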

\begin{proof}
	Suppose $R$ is an $e$-Armendariz ring and let $r\in R$. Construct two polynomials $f(x)=(1-e)-(1-e)rex$ and $g(x)=e+(1-e)rex$ in $R[x]$. Clearly, $f(x)g(x)=0$, but since $R$ is an $e$-Armendariz ring, it means 
	\begin{eqnarray*}
		(1-e)(1-e)re^2&=&0\\
		\implies (1-e)re&=&0\\
		\implies re-ere&=&0\\
		\implies re&=&ere.\\	
	\end{eqnarray*}
	Hence $e$ is left semicentral. Now we prove that $eRe$ is an Armendariz ring. Suppose $f(x)=\sum_{i=0}^{m}(ea_ie)x^i$ and $g(x)=\sum_{j=0}^{n}(eb_je)x^j$ are two polynomials in $eRe[x]$ such that $f(x)g(x)=0$. Since $eRe$ is a subring of $R$, it means $f(x),g(x)\in R[x]$ and $R[x]$ is an $e$-Armendariz ring, which implies $(ea_ie)(eb_je)e=0$ and hence $(ea_ie)(eb_je)=0$ which implies $eRe$ is an $e$-Armendariz ring.\\
	Conversely, suppose that $eRe$ is an $e$-Armendariz ring and $e$ is left semicentral. Now we prove that $R$ is an $e$-Armendariz ring. Suppose $f(x)=\sum_{i=0}^{m}a_ix^i, g(x)=\sum_{j=0}^{n}b_jx^j\in R[x]$ such that $f(x)g(x)=0$. Since, $f(x)g(x)=a_0b_0+(a_0b_1+a_1b_0)x+\cdots+a_mb_nx^{m+n}=0.$ On expanding $f(x)g(x)=0$ and using that $e$ is left semicentral we get,
	\begin{eqnarray*}
	a_0b_0+(a_0b_1+a_1b_0)x+\cdots+a_mb_nx^{m+n}&=&0\\
	a_0b_0e+(a_0b_1e+a_1b_0e)x+\cdots+a_mb_nex^{m+n}&=&0\\
	a_0eb_0e+(a_0eb_1e+a_1eb_0e)x+\cdots+a_meb_nex^{m+n}&=&0\\
	ea_0eb_0e+(ea_0eb_1e+ea_1eb_0e)x+\cdots+ea_meb_nex^{m+n}&=&0\\
	(ea_0e+ea_1ex+\cdots+ea_mex^m)(eb_0e+eb_1ex+\cdots+eb_nex^n)&=&0.\\
	\end{eqnarray*}
	Hence $f(x)g(x)=(ea_0e+ea_1ex+\cdots+ea_mex^m)(eb_0e+eb_1ex+\cdots+eb_nex^n)=0$ in $eRe[x]$ but $eRe$ is an Armendariz ring, it means $(ea_ie)(eb_je)=0$ which implies $a_ieb_je=0$ which further implies $a_ib_je=0$ and hence $R$ is an $e$-Armendariz ring.
\end{proof}
In \cite{R}, N.H McCoy introduced McCoy ring as a generalization of commutative rings. A ring $R$ is said to be McCoy if whenever $f(x),g(x)\in R[x]$ satisfy $f(x)g(x)=0$ then there exist nonzero $r,s\in R$ such that $f(x)r=0$ and $sg(x)=0$. It is clear that all Armendariz rings are McCoy rings. Now we prove that all $e$-Armendariz rings are McCoy but the converse need not be true in general, hence class of $e$-Armendariz rings strictly lie between Armendariz rings and McCoy rings.
\begin{theorem}
	All $e$-Armendariz rings are McCoy.
\end{theorem}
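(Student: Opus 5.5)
The plan is to produce the annihilators directly from the coefficient relations that the $e$-Armendariz hypothesis supplies, using Theorem 2.1 to simplify products with $e$. Let $f(x)=\sum_{i=0}^m a_ix^i$ and $g(x)=\sum_{j=0}^n b_jx^j$ be nonzero polynomials in $R[x]$ with $f(x)g(x)=0$. The right $e$-Armendariz condition gives $a_ib_je=0$ for all $i,j$. By Theorem 2.1, $e$ is left semicentral, that is, $re=ere$ for every $r\in R$. In particular $b_je=eb_je\in eRe$ for every $j$.

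\textbf{Right annihilator.} Suppose first that $b_{j_0}e\neq 0$ for some $j_0$. Take $r=b_{j_0}e$. Then
\[
f(x)r=\sum_{i=0}^m a_ib_{j_0}e\,x^i=0,
\]
so $r$ is a nonzero element with $f(x)r=0$. This case is immediate.

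\textbf{Left annihilator.} For the element $s$ with $s\,g(x)=0$, I would argue symmetrically in the case where $e$ is also used on the left. If $R$ is in addition left $e$-Armendariz, so that $ea_ib_j=0$, and some $ea_{i_0}\neq 0$, then $s=ea_{i_0}$ gives
\[
s\,g(x)=\sum_{j=0}^n ea_{i_0}b_j\,x^j=0 .
\]
Note that the paper calls $S$ simply ``$e$-Armendariz'' after checking both sides, which suggests the statement intends the two-sided notion.

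\textbf{The degenerate cases (main obstacle).} What remains is the case $g(x)e=0$ (respectively $ef(x)=0$), where the elements produced above all vanish. In that case I would try to reduce to the corner ring. Since $(1-e)Re=0$, the ring decomposes in the Peirce sense as $R=eRe\oplus eR(1-e)\oplus(1-e)R(1-e)$, with $eRe$ Armendariz and therefore McCoy. The first things to try are $r=1-e$, or an element of $eR(1-e)$, exploiting $a_ib_je=0$ together with $g(x)=g(x)(1-e)$.

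I expect this step to be the genuine obstruction. The hypotheses place no restriction on the corner $(1-e)R(1-e)$. For example, take $R=A\times B$ with $A$ a domain and $e=(1,0)$. Then $R$ is $e$-Armendariz for every ring $B$. If $B$ is not McCoy, one can take $f=(f_A,f_B)$ with $f_A$ a nonzero-divisor in $A[x]$ and $g=(0,g_B)$. In that example no nonzero $r$ seems to satisfy $f(x)r=0$.

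So in carrying out the proof I would first test this example carefully. If it is indeed a counterexample, the statement would need an extra hypothesis, and the natural candidate is that $(1-e)R(1-e)$ is McCoy or that $e=1$. Under such a hypothesis the degenerate case would be handled by a McCoy witness inside $(1-e)R(1-e)$, and the argument above would complete the proof.
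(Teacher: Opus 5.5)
Your suspicion is correct, and the gap is in the paper's proof rather than in your reasoning. The paper's argument is exactly your non-degenerate case: it takes $s=ea_i$ and $r=b_je$ as the McCoy witnesses and never checks that some $ea_i$ and some $b_je$ are nonzero. (Its definition of McCoy also drops the requirement $f,g\neq 0$. As written, no nonzero ring would qualify, since $f=1$, $g=0$ would demand a nonzero $r$ with $1\cdot r=0$. You rightly used the standard definition.) The degenerate case $g(x)e=0$ that you isolate cannot be handled, and the statement is false, even for the two-sided notion.

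Your $A\times B$ template works, and the paper's own Example 2.2 already instantiates it. Let $D$ be a nonzero reduced ring (e.g.\ $\mathbb{Z}$), $S=D\times T_2(D)$ and $e=(1,0)$; the paper shows $S$ is both left and right $e$-Armendariz. With $E_{ij}$ the matrix units, put
\[
f(x)=\bigl(1,E_{11}\bigr)+\bigl(0,E_{11}-E_{12}\bigr)x,\qquad g(x)=\bigl(0,E_{22}\bigr)+\bigl(0,E_{12}+E_{22}\bigr)x .
\]
This is the paper's pair with a $1$ placed in the first component of $f$, which plays the role of your nonzero-divisor $f_A$. Then $f(x)g(x)=0$ and $g(x)e=0$. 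Suppose $f(x)(\rho,P)=0$ with $P=\begin{pmatrix}p&q\\0&t\end{pmatrix}$. The constant term gives $\rho=0$ and $E_{11}P=0$, so $p=q=0$. The $x$-coefficient then gives $(E_{11}-E_{12})P=\begin{pmatrix}0&-t\\0&0\end{pmatrix}=0$, so $t=0$. Hence $S$ is not right McCoy. Alternatively, $B=M_2(k)$ works with $f_B=E_{11}+E_{12}x$ and $g_B=E_{21}-E_{11}x$.

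One correction to your proposed repair: assuming $(1-e)R(1-e)$ is McCoy does not rescue the right-sided statement. A witness $r$ taken in $(1-e)R(1-e)$ for the $(1-e)$-corner of $f$ must also be killed by the $eR(1-e)$-components of the coefficients of $f$, and nothing forces that. Concretely, $T_2(k)$ with $e=E_{11}$ is right $e$-Armendariz, since $e$ is left semicentral and $eRe\cong k$, and both corners are $k$. Yet the pair $E_{11}+(E_{11}-E_{12})x$, $E_{22}+(E_{12}+E_{22})x$ has $g(x)e=0$ and shows $T_2(k)$ is not right McCoy.

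Your fix does work for the two-sided notion. Mirror the paper's trick with $f(x)=e+er(1-e)x$ and $g(x)=(1-e)-er(1-e)x$. Then $f(x)g(x)=0$, and the left condition gives $er(1-e)=0$. Together with left semicentrality this makes $e$ central. So $R\cong eRe\times(1-e)R(1-e)$, and a product of McCoy rings is McCoy.
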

\begin{proof}
	Suppose $R$ is an $e$-Armendariz ring and take $f(x)=\sum_{i=0}^{m}a_ix^i, g(x)=\sum_{j=0}^{n}b_jx^j\in R[x]$ such that $f(x)g(x)=0$. Since $R$ is an $e$-Armendariz ring, it implies $ea_ib_j=0$ and $a_ib_je=0$. Here $(ea_i)b_j=0$ implies $R$ is left McCoy and $a_i(b_je)=0$ impies $R$ is right McCoy. Hence $R$ is McCoy ring.
\end{proof}
The converse of the above theorem need not be true in general by the following example.
\begin{example}
	Let $\mathbb{Z}_4$ be the ring of integers modulo $4$. Consider the ring\begin{center}
		
	 $R=\left\{ \begin{pmatrix}
		a&b\\0&a
	\end{pmatrix}\big|~a,b\in \mathbb{Z}_4\right\}.$\end{center}
As $R$ is commutative then $R$ is McCoy by \cite[Theorem 2]{R}. Now we prove that $R$ is not $e$-Armendariz. Take $f(x)=g(x)=\begin{pmatrix}
	2&0\\0&2
\end{pmatrix}+\begin{pmatrix}
2&1\\0&2
\end{pmatrix}x$. Then $f(x)g(x)=0$. Now the only nonzero idempotent of $R$ is $\begin{pmatrix}
1&0\\0&1
\end{pmatrix}$. It means $\begin{pmatrix}
2&0\\0&2
\end{pmatrix}\begin{pmatrix}
2&1\\0&2
\end{pmatrix}\begin{pmatrix}
1&0\\0&1
\end{pmatrix}\ne \begin{pmatrix}
0&0\\0&0
\end{pmatrix}$. Hence $R$ is not an $e$-Armendariz ring.
\end{example}
	
\end{document}